\newtheorem{theorem}{Theorem}[section]
\newtheorem{corollary}[theorem]{Corollary}
\newtheorem{lemma}[theorem]{Lemma}
\newtheorem{proposition}[theorem]{Proposition}
\theoremstyle{definition}
\newtheorem{remark}[theorem]{Remark}
\numberwithin{equation}{section}
\newcommand{\real}{\mathbb R}
\def\natu{\mathbb N}
\begin{document}

\title[Boundedness of the extremal solutions in dimension $4$] {Boundedness of the extremal solutions in dimension $4$}
\author{Salvador Villegas}
\thanks{The author has been supported by the MEC Spanish grants
MTM2008-00988 and MTM2009-10878}
\address{Departamento de An\'{a}lisis
Matem\'{a}tico, Universidad de Granada, 18071 Granada, Spain.}
\email{svillega@ugr.es}

\begin{abstract}
In this paper we establish the boundedness of the extremal solution $u^\ast$ in dimension $N=4$ of the semilinear elliptic
equation $-\Delta u=\lambda f(u)$, in a general smooth bounded domain $\Omega \subset \mathbb{R}^N$, with Dirichlet
data $u|_{\partial \Omega}=0$, where $f$ is a $C^1$ positive,
nondecreasing and convex function in $[0,\infty)$ such that
$f(s)/s\rightarrow\infty$ as $s\rightarrow\infty$.

In addition, we prove that, for $N\geq 5$, the extremal solution $u^*\in W^{2,\frac{N}{N-2}}$. This gives $u^\ast\in L^\frac{N}{N-4}$, if $N\geq 5$ and $u^*\in H_0^1$, if $N=6$.
\end{abstract}

\maketitle

\section{Introduction and main results}

In this paper, we consider the following semilinear elliptic equation, which has been extensively studied:
$$
\left\{
\begin{array}{ll}
-\Delta u=\lambda f(u)\ \ \ \ \ \ \  & \mbox{ in } \Omega \, ,\\
u\geq 0 & \mbox{ in } \Omega \, ,\\
u=0  & \mbox{ on } \partial\Omega \, ,\\
\end{array}
\right. \eqno{(P_\lambda)}
$$
\

\noindent where $\Omega\subset\real^N$ is a smooth bounded domain,
$N\geq 1$, $\lambda\geq 0$ is a real parameter and the
nonlinearity $f:[0,\infty)\rightarrow \real$ satisfies

\begin{equation}\label{convexa}
f \mbox{ is } C^1, \mbox{ nondecreasing and convex, }f(0)>0,\mbox{
and }\lim_{u\to +\infty}\frac{f(u)}{u}=+\infty.
\end{equation}

It is well known that there exists a finite positive extremal
parameter $\lambda^\ast$ such that ($P_\lambda$) has a minimal
classical solution $u_\lambda\in C^2(\overline{\Omega})$ if $0\leq
\lambda <\lambda^\ast$, while no solution exists, even in the weak
sense, for $\lambda>\lambda^\ast$. The set $\{u_\lambda:\, 0\leq
\lambda < \lambda^\ast\}$ forms a branch of classical solutions
increasing in $\lambda$. Its increasing pointwise limit
$u^\ast(x):=\lim_{\lambda\uparrow\lambda^\ast}u_\lambda(x)$ is a
weak solution of ($P_\lambda$) for $\lambda=\lambda^\ast$, which
is called the extremal solution of ($P_\lambda$) (see
\cite{Bre,BV,Dup}). In fact, if $f$ satisfies all the hypotheses of
(\ref{convexa}) except the convexity, then all the results we have mentioned
remain true, except the continuity of the family of minimal solutions
$\{ u_\lambda \}$ as a function of $\lambda$ (see \cite[Proposition 5.1]{cc}).

The regularity and properties of the extremal solutions depend
strongly on the dimension $N$, domain $\Omega$ and nonlinearity
$f$. When $f(u)=e^u$, it is known that $u^\ast\in L^\infty
(\Omega)$ if $N<10$ (for every $\Omega$) (see \cite{CrR,MP}),
while $u^\ast (x)=-2\log \vert x\vert$ and $\lambda^\ast=2(N-2)$
if $N\geq 10$ and $\Omega=B_1$ (see \cite{JL}). There is an
analogous result for $f(u)=(1+u)^p$ with $p>1$ (see \cite{BV}).
Brezis and V\'azquez \cite{BV} raised the question of determining
the boundedness of $u^\ast$, depending on the dimension $N$, for
general nonlinearities $f$ satisfying (\ref{convexa}). The first general results were due to Nedev \cite{Ne}, who proved that $u^\ast \in
L^\infty (\Omega)$ if $N\leq 3$, and $u^\ast \in
L^p (\Omega)$ for every $p<N/(N-4)$, if $N\geq 4$. The best known result was established by Cabr\'e \cite{cabre4}, who proved that $u^\ast \in
L^\infty (\Omega)$ if $N\leq 4$ and $\Omega$ is convex (no convexity on $f$ is imposed). If $N\geq 5$ and $\Omega$ is convex Cabr\'e and Sanch\'on \cite{casa} have obtained that $u^\ast \in
L^\frac{2N}{N-4} (\Omega)$ (again, no convexity on $f$ is imposed). On the other hand, Cabr\'e and Capella \cite{cc} have proved
that $u^\ast \in L^\infty (\Omega)$ if $N\leq 9$ and $\Omega=B_1$. Recently, Cabr\'e and Ros-Oton \cite{cros} have obtained that $u^\ast \in L^\infty (\Omega)$ if $N\leq 7$ and $\Omega$ is a convex domain of double revolution (see \cite{cros} for the definition).

Another interesting question is whether the
extremal solution lies in the energy class.  Nedev \cite{Ne,Ne2}
proved that $u^\ast \in H_0^1(\Omega)$ if $N\leq 5$ (for every
$\Omega$) or $\Omega$ is convex (for every $N\geq 1$). Brezis and V\'azquez \cite{BV} proved that a sufficient condition
to have $u^\ast \in H_0^1(\Omega)$ is that $\liminf_{u\to \infty}
u\, f'(u)/f(u)>1$ (for every $\Omega$ and $N\geq 1$).

In this paper we establish the boundedness of the extremal solution for general bounded smooth domains in dimension $4$, not necessarily convex. Contrary to the result of Cabr\'e, we need to impose the convexity of $f$. In higher dimensions, we improve the results of Nedev \cite{Ne,Ne2} and it is obtained that $u^\ast \in L^\frac{N}{N-4}(\Omega)$, if $N\geq 5$ and $u^\ast \in H_0^1(\Omega)$, if $N=6$.

\begin{theorem}\label{N=4}

Let $f$ be a function satisfying (\ref{convexa}) and $\Omega\subset \mathbb{R}^4$ be a smooth bounded domain. Let $u^\ast$ be the extremal
solution of ($P_\lambda$). Then $u^\ast\in L^\infty (\Omega)$.

\end{theorem}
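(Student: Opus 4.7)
The plan is to establish a uniform $L^\infty(\Omega)$ bound for the family of minimal classical solutions $\{u_\lambda\}_{0\leq\lambda<\lambda^*}$; passing to the increasing pointwise limit $u_\lambda\uparrow u^*$ will then give $u^*\in L^\infty(\Omega)$. The standard route is to combine the semi-stability inequality
\[
\lambda\int_\Omega f'(u_\lambda)\,\varphi^2 \leq \int_\Omega|\nabla\varphi|^2,\qquad \varphi\in H_0^1(\Omega),
\]
with the equation $-\Delta u_\lambda=\lambda f(u_\lambda)$ so as to produce an integrability estimate on $f(u_\lambda)$ strong enough to feed into Calder\'on--Zygmund regularity and the Sobolev embedding $W^{2,p}\hookrightarrow L^\infty$, which holds once $p>N/2=2$.

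First I would set up the basic framework. Inserting $\varphi=g(u_\lambda)$ with $g\in C^1$, $g(0)=0$, into the stability inequality gives
\[
\lambda\int_\Omega f'(u_\lambda)\,g(u_\lambda)^2 \leq \int_\Omega g'(u_\lambda)^2\,|\nabla u_\lambda|^2;
\]
multiplying the equation by $G(u):=\int_0^u g'(t)^2\,dt$ and integrating by parts yields
\[
\int_\Omega g'(u_\lambda)^2\,|\nabla u_\lambda|^2 \;=\; \lambda\int_\Omega f(u_\lambda)\,G(u_\lambda).
\]
Combining, one obtains the fundamental inequality $\int_\Omega f'(u_\lambda)\,g(u_\lambda)^2 \leq \int_\Omega f(u_\lambda)\,G(u_\lambda)$, valid for every admissible $g$. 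The whole game is to choose $g$ so that convexity of $f$ forces the right-hand side to be controlled---modulo a small multiple of the left-hand side that can be absorbed---by a manageable quantity involving $f(u_\lambda)$ alone.

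The key step, and the main obstacle, is to deduce from this inequality an estimate $\|f(u_\lambda)\|_{L^p(\Omega)}\leq C$ for some $p>2$, uniformly in $\lambda<\lambda^*$. The baseline choice behind the paper's $W^{2,N/(N-2)}$ bound in $N\geq 5$ produces, in dimension $4$, the borderline estimate $\|f(u_\lambda)\|_{L^2(\Omega)}\leq C$, which gives only $u_\lambda\in H^2(\Omega)$---not enough, since $H^2\not\hookrightarrow L^\infty$ when $N=4$. Something sharper is needed: a test function $g$ finely tuned to the growth of $f$ (for instance $g'(u)=f'(u)^\beta$ with a non-obvious $\beta$, or a truncated version of $f$ itself), together with a Young-type absorption of part of the resulting right-hand side into the left. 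Convexity of $f$ enters through elementary one-variable inequalities such as the monotonicity of $f'$, which bounds $G(u)$ by $u\,f'(u)^{2\beta}$, and $f(u)-f(u/2)\geq (u/2)f'(u/2)$, which permits one to interchange $u$, $f(u)$ and $f'(u)$ inside the integrals and to pass the exponent past the critical value $2$.

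Once the improved integrability $\|f(u_\lambda)\|_{L^p(\Omega)}\leq C$ is secured with some $p>2$ uniformly in $\lambda$, the conclusion is routine: Calder\'on--Zygmund estimates for $-\Delta$ on $\Omega$ give $\|u_\lambda\|_{W^{2,p}(\Omega)}\leq C'$ uniformly, and Morrey's embedding $W^{2,p}(\Omega)\hookrightarrow C(\overline\Omega)$, valid since $2p>N=4$, yields the desired uniform $L^\infty$ bound; the monotone convergence $u_\lambda\uparrow u^*$ then produces $u^*\in L^\infty(\Omega)$.
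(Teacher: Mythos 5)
There is a genuine gap at the heart of your argument. Your plan is to extract a uniform bound $\|f(u_\lambda)\|_{L^p(\Omega)}\leq C$ for some $p>2$ and then invoke $W^{2,p}(\Omega)\hookrightarrow L^\infty(\Omega)$ (valid since $2p>4$). But the semi-stability machinery you set up, with test functions $\varphi=g(u_\lambda)$, is exactly what produces Nedev's estimate
\[
\int_\Omega \frac{(f(u_\lambda)-f(0))^2}{u_\lambda}\leq C,
\]
which in dimension $N=4$ amounts to an $L^2$-type bound on $f(u_\lambda)$ modulo $\|u_\lambda\|_{L^\infty}$. This is borderline, as you yourself note. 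You then gesture at a ``finely tuned'' test function ($g'(u)=f'(u)^\beta$, truncations of $f$, Young absorption) to push past the exponent $2$, but you never verify that any such choice works for a general convex $f$; in fact no such improvement is available in this generality, which is precisely why the regularity problem for general domains is hard at and beyond $N=4$. The sketch stops short exactly where the theorem is proved or not proved.

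The paper's route is genuinely different and does \emph{not} try to beat the exponent $p=2$. The missing ingredient you would need is supplied instead by Cabr\'e's geometric estimate (Theorem 2.1 in the paper, from \cite{cabre4}): for a semi-stable (or minimal) solution in dimension $4$,
\[
\Vert u\Vert_{L^\infty(\Omega)}\leq C\,\Vert \nabla u\Vert_{L^4(\Omega)}
\]
with a \emph{universal} constant $C$. This inequality is false for arbitrary $W^{1,4}_0$ functions (Remark 2.4 in the paper); it is the stability of $u_\lambda$ that rescues the borderline Sobolev embedding. With it in hand, the argument closes at the $L^2$ level: Nedev's bound (Lemma 1.3) gives $\|f(u_\lambda)\|_{L^2}^2\lesssim 1+\|u_\lambda\|_{L^\infty}$, elliptic regularity gives $\|u_\lambda\|_{W^{2,2}}\lesssim\|f(u_\lambda)\|_{L^2}$, the embedding $W^{2,2}(\Omega)\subset W^{1,4}(\Omega)$ in $\mathbb R^4$ gives $\|\nabla u_\lambda\|_{L^4}\lesssim\|u_\lambda\|_{W^{2,2}}$, and Cabr\'e's estimate converts this back to $\|u_\lambda\|_{L^\infty}$, producing the self-improving inequality $\|u_\lambda\|_{L^\infty}^2\leq A+B\|u_\lambda\|_{L^\infty}$. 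So the crucial idea you are missing is not a sharper integrability estimate on $f(u_\lambda)$ but rather the stability-dependent $L^\infty$--$W^{1,4}$ inequality that replaces the (failing) critical Sobolev embedding.
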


\begin{theorem}\label{N>4}

Let $f$ be a function satisfying (\ref{convexa}) and $\Omega\subset \mathbb{R}^N$ be a smooth bounded domain. Let $u^\ast$ be the extremal
solution of ($P_\lambda$). Then, for $N\geq 5$, $u^\ast\in W^{2,\frac{N}{N-2}}(\Omega)$ and $f(u^\ast)\in L^\frac{N}{N-2}(\Omega)$ . In particular,

\begin{enumerate}

\item[i)]If $N\geq 5$, then $\displaystyle{u^\ast\in L^\frac{N}{N-4}(\Omega)}$.
\item[ii)] If $N=6$, then $u^\ast \in  H_0^1(\Omega)$.

\end{enumerate}

\end{theorem}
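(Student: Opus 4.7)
The whole theorem reduces to a single uniform estimate: I will show that $\|f(u_\lambda)\|_{L^{N/(N-2)}(\Omega)} \leq C$ for all $\lambda \in [0,\lambda^\ast)$, with $C$ independent of $\lambda$. Granted this, I apply Calderón--Zygmund theory to the equation $-\Delta u_\lambda = \lambda f(u_\lambda)$ with zero Dirichlet data to get $\|u_\lambda\|_{W^{2,N/(N-2)}(\Omega)} \leq C'$ uniformly. Monotone convergence $u_\lambda \uparrow u^\ast$ together with weak compactness in $W^{2,N/(N-2)}(\Omega)$ then yields $u^\ast \in W^{2,N/(N-2)}(\Omega)$ and $f(u^\ast) \in L^{N/(N-2)}(\Omega)$. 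The two consequences are then just Sobolev embedding: for $N \geq 5$, $W^{2,N/(N-2)}(\Omega) \hookrightarrow L^{N/(N-4)}(\Omega)$ because $\frac{N \cdot N/(N-2)}{N - 2N/(N-2)} = \frac{N}{N-4}$; and for $N=6$, $W^{2,3/2}(\Omega) \hookrightarrow W^{1,2}(\Omega) = H^1(\Omega)$ because $\frac{6 \cdot 3/2}{6 - 3/2} = 2$. Since $u^\ast$ vanishes on $\partial \Omega$ in the trace sense, the $H^1$ membership is actually $H_0^1$.

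The uniform bound $\|f(u_\lambda)\|_{L^{N/(N-2)}} \leq C$ is the heart of the matter, and the tools are the two standard ones: the semistability inequality
\[
\lambda \int_\Omega f'(u_\lambda)\,\varphi^2\,dx \;\leq\; \int_\Omega |\nabla \varphi|^2\,dx, \qquad \forall\, \varphi \in H_0^1(\Omega),
\]
satisfied by every minimal solution, and the integral identity obtained by multiplying $-\Delta u_\lambda = \lambda f(u_\lambda)$ by a chosen function $h(u_\lambda)$ with $h(0)=0$ and integrating by parts. I would test the stability inequality with $\varphi = g(u_\lambda)$, where $g(0)=0$, giving $\lambda \int f'(u_\lambda) g(u_\lambda)^2 \leq \int g'(u_\lambda)^2 |\nabla u_\lambda|^2$, and couple $g$ and $h$ by the constraint $g'(t)^2 = h'(t)$ so that the $|\nabla u_\lambda|^2$ terms match; eliminating the gradient produces the purely zero-order inequality
\[
\int_\Omega f'(u_\lambda) g(u_\lambda)^2 \,dx \;\leq\; \int_\Omega f(u_\lambda)\, h(u_\lambda)\,dx.
\]
The task then becomes: choose $h$ so that the left side dominates $\int f(u_\lambda)^{N/(N-2)}$ while the right side is controlled by a known quantity (typically by interpolation against a smaller power of $f(u_\lambda)$ that has already been controlled by Nedev-type estimates, or by a boundary/far-from-boundary decomposition using that $u_\lambda$ is bounded near $\partial\Omega$).

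The principal obstacle is capturing precisely the endpoint exponent $N/(N-2)$. Nedev's original choice of $g, h$ yields only a strict $L^p$ bound for $p < N/(N-2)$ on $f(u^\ast)$, because passing through Sobolev embedding at the end introduces a loss. To gain the endpoint I expect to need a sharper use of convexity of $f$, exploiting both $f'(t) t \leq f(t)$-type bounds and the superlinear growth $f(t)/t \to \infty$, perhaps iteratively in the spirit of a Moser-type scheme, or by a two-step argument in which the weaker Nedev bound on $f(u^\ast)$ is improved by reinserting it through the Calderón--Zygmund/Sobolev chain and then feeding the resulting better integrability of $u_\lambda$ back into the inequality above. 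Making constants independent of $\lambda$ throughout (so that the estimate survives the limit $\lambda \uparrow \lambda^\ast$) will be the delicate bookkeeping.
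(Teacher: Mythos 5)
Your outer reduction is correct: it suffices to bound $\|f(u_\lambda)\|_{L^{N/(N-2)}(\Omega)}$ uniformly in $\lambda$, and then elliptic regularity plus the Sobolev embeddings $W^{2,N/(N-2)} \hookrightarrow L^{N/(N-4)}$ (for $N\geq 5$) and $W^{2,3/2} \hookrightarrow H^1$ (for $N=6$) deliver the stated conclusions. But the central estimate --- the uniform $L^{N/(N-2)}$ bound itself --- is left as a wish list of possible strategies rather than a proof, and the strategies you propose (a Moser-type iteration, ``improving'' Nedev's bound, a sharper exploitation of convexity) are not what is needed and would lead you away from the argument that actually closes.

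The mechanism the paper uses is a single absorption step, with no iteration and no strengthening of Nedev's estimate. Nedev's stability bound, in the form $\int_{\{u_\lambda>1\}} f(u_\lambda)^2/u_\lambda \leq M$ with $M$ independent of $\lambda$ (Lemma~\ref{est}), is used exactly as is. One writes
\[
\int_{\{u_\lambda>1\}} f(u_\lambda)^{\frac{N}{N-2}} = \int_{\{u_\lambda>1\}} \Bigl(\tfrac{f(u_\lambda)^2}{u_\lambda}\Bigr)^{\frac{N}{2(N-2)}} u_\lambda^{\frac{N}{2(N-2)}}
\]
and applies H\"older with conjugate exponents $\tfrac{2(N-2)}{N}$ and $\tfrac{2(N-2)}{N-4}$, which turns the first factor into $M^{N/(2(N-2))}$ and the second into $\|u_\lambda\|_{L^{N/(N-4)}}^{N/(2(N-2))}$. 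Chaining this through elliptic regularity ($\|u_\lambda\|_{W^{2,N/(N-2)}} \lesssim \|f(u_\lambda)\|_{L^{N/(N-2)}}$) and the Sobolev embedding ($\|u_\lambda\|_{L^{N/(N-4)}} \lesssim \|u_\lambda\|_{W^{2,N/(N-2)}}$) produces a closed inequality of the form
\[
\|u_\lambda\|_{W^{2,N/(N-2)}}^{\frac{N}{N-2}} \leq A + B\,\|u_\lambda\|_{W^{2,N/(N-2)}}^{\frac{N}{2(N-2)}},
\]
with $A,B$ independent of $\lambda$. Since the right-hand exponent $\tfrac{N}{2(N-2)}$ is strictly smaller than the left-hand exponent $\tfrac{N}{N-2}$, this absorbs and gives the uniform bound in one pass. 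Your proposal gestures at ``feeding the resulting better integrability of $u_\lambda$ back into the inequality,'' which is the right instinct, but you frame it as an open-ended bootstrap meant to improve the Nedev bound; in fact no improvement is needed, and the reason the exponents land exactly at the endpoint is the specific $f^2/u$ structure of Nedev's estimate --- merely knowing $f(u^\ast)\in L^q$ for $q<N/(N-2)$, as you suggest interpolating against, would not do it.
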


The proofs of Theorems \ref{N=4} and \ref{N>4} use the semi-stability of of the minimal solutions $u_\lambda$ ($0<\lambda<\lambda^\ast$).

Recall that a classical solution $u$ of

\begin{equation}\label{general}
\left\{
\begin{array}{ll}
-\Delta u=g(u)\ \ \ \ \ \ \  & \mbox{ in } \Omega \, ,\\
u=0  & \mbox{ on } \partial\Omega \, ,\\
\end{array}
\right.
\end{equation}

\noindent where $N\geq 1$, $g\in C^1(\mathbb{R})$ and $\Omega\subset \mathbb{R}^N$ is a smooth bounded domain, is semistable if

$$\int_\Omega \left( \vert \nabla \xi\vert^2-g'(u)\xi^2\right) \,
dx\geq 0\, ,$$ \noindent for every $\xi\in C^\infty (\Omega)$ with compact
support in $\Omega$.

Note that this expression is the second variation of energy at $u$. The semistability of a solution $u$ is equivalent to the nonnegativity of $\lambda_1\left(-\Delta-g'(u);\Omega\right)$, the first Dirichlet eigenvalue of the linearized operator $-\Delta-g'(u)$ at $u$ in $\Omega$.

To prove our main results we will use the following lemma, which follows easily from a result of Nedev \cite{Ne}.

\begin{lemma}\label{est}

Let $N\geq 1$, $f$ be a function satisfying (\ref{convexa}) and $\Omega\subset \mathbb{R}^N$ be a smooth bounded domain. Then there exists a positive constant $M=M(f,\Omega)$, depending on $f$ and $\Omega$, but not on $\lambda \in (0,\lambda^\ast)$, such that

$$\int_{u_\lambda>1} \frac{f(u_\lambda)^2}{u_\lambda}\leq M\, , \ \ \ \forall \lambda\in (0,\lambda^\ast).$$

\end{lemma}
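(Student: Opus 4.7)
The strategy is to reduce the claim to a uniform a priori estimate of Nedev by means of an elementary convexity manipulation.

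Since $f$ is convex with $f(0)>0$, the tangent line inequality of $f$ at a point $u\ge 0$ evaluated at the origin reads $f(0)\ge f(u)-uf'(u)$, that is,
\[
u f'(u)\;\ge\;f(u)-f(0)\;\ge\;0,\qquad u\ge 0.
\]
Multiplying by $f(u)>0$ and dividing by $u>0$, on the set $\{u>1\}$ we obtain the pointwise inequality
\[
\frac{f(u)^{2}}{u}\;\le\;f(u)f'(u)+\frac{f(0)f(u)}{u}\;\le\;f(u)f'(u)+f(0)f(u).
\]
Integrating over $\{u_\lambda>1\}\subset\Omega$ reduces Lemma~\ref{est} to uniform-in-$\lambda$ bounds on $\int_\Omega f(u_\lambda)f'(u_\lambda)\,dx$ and $\int_\Omega f(u_\lambda)\,dx$.

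The second bound is classical: pairing $(P_\lambda)$ with the first positive Dirichlet eigenfunction $\varphi_1$ of $-\Delta$ in $\Omega$ gives $\int_\Omega f(u_\lambda)\varphi_1\,dx=(\lambda_1/\lambda)\int_\Omega u_\lambda\varphi_1\,dx\le C$, and this upgrades to an unweighted $L^{1}$ bound using the standard fact that $u^\ast$ is bounded in a neighbourhood of $\partial\Omega$. The first bound is the a priori estimate of Nedev from \cite{Ne}, obtained by testing the semi-stability inequality $\int_\Omega(|\nabla\xi|^{2}-\lambda f'(u_\lambda)\xi^{2})\,dx\ge 0$ with $\xi=f(u_\lambda)-f(0)\in H^{1}_{0}(\Omega)$, combining with the integral identity $\int_\Omega f'(u_\lambda)^{2}|\nabla u_\lambda|^{2}\,dx=\lambda\int_\Omega f(u_\lambda)G(u_\lambda)\,dx$ where $G(u):=\int_{0}^{u}f'(t)^{2}\,dt$ (obtained by multiplying $(P_\lambda)$ by $G(u_\lambda)$), and then exploiting the convexity of $f$.

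\textbf{Main obstacle.} The delicate ingredient is Nedev's test function argument for $\int_\Omega f(u_\lambda)f'(u_\lambda)\,dx$; once this is granted, the convexity reduction above immediately yields the lemma. The phrase ``follows easily from a result of Nedev'' in the paper is to be read in this sense: the nontrivial semi-stability estimate is borrowed wholesale from \cite{Ne}, and the present lemma is obtained from it by a one-line application of the tangent line inequality together with the trivial splitting $\{u_\lambda>1\}\cup\{u_\lambda\le 1\}$.
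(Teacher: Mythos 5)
Your reduction via the tangent-line inequality is algebraically correct, but it shifts the burden onto a bound that you then mis-attribute to Nedev, and that mis-attribution is a genuine gap.

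Concretely: you reduce to bounding $\int_\Omega f(u_\lambda)f'(u_\lambda)\,dx$ uniformly in $\lambda$ and assert that this is ``the a priori estimate of Nedev from \cite{Ne}.'' It is not. What Nedev's semi-stability argument (test with $\xi=f(u_\lambda)-f(0)$, combine with the $G$-identity, exploit convexity) actually yields, and what the paper quotes as equation~(\ref{nedev}), is
\[
\int_\Omega \frac{\bigl(f(u_\lambda)-f(0)\bigr)^2}{u_\lambda}\,dx\le M_1 .
\]
This is strictly \emph{weaker} than your claimed bound: writing $\tilde f=f-f(0)$, the tangent-line inequality $u f'(u)\ge\tilde f(u)$ together with $\tilde f\le f$ gives $\tilde f(u)^2/u\le f(u)f'(u)$ pointwise, so a uniform bound on $\int ff'$ would imply~(\ref{nedev}) but not conversely. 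In particular, (\ref{nedev}) gives no upper bound on $\int ff'$, because $f'$ can be much larger than $\tilde f/u$ (take $f(u)=e^u$). Your sketch of how ``exploiting the convexity of $f$'' upgrades the identity $\int fG\ge\int f'\tilde f^2$ to $\int ff'\le C$ does not go through: the natural convexity bound $G(u)\le f'(u)\tilde f(u)$ plugged into that inequality collapses to the trivial $f(0)\int f'\tilde f\ge 0$. So the proposal rests on an estimate that is neither Nedev's nor derivable from it by the route indicated.

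The paper's own proof of Lemma~\ref{est} avoids this entirely and is much lighter: starting from~(\ref{nedev}), it uses only the elementary observation that since $f(s)\to+\infty$, the function $2\bigl(f(s)-f(0)\bigr)^2-f(s)^2$ tends to $+\infty$, hence is bounded below on $[0,\infty)$ by some $-M_2$; then on $\{u_\lambda>1\}$ one has $1/u_\lambda\le 1$ and
\[
\int_{u_\lambda>1}\frac{f(u_\lambda)^2}{u_\lambda}\le\int_{u_\lambda>1}\frac{M_2+2\bigl(f(u_\lambda)-f(0)\bigr)^2}{u_\lambda}\le M_2|\Omega|+2M_1 .
\]
No $L^1(\Omega)$ bound on $f(u_\lambda)$, no eigenfunction pairing, and no boundary-regularity input is needed. (Your subsidiary argument for $\int_\Omega f(u_\lambda)\,dx\le C$, invoking $\varphi_1$ and boundedness of $u^\ast$ near $\partial\Omega$, is a correct but heavier classical fact that the paper's proof simply does not require.) To repair your approach you would either have to prove $\int_\Omega f(u_\lambda)f'(u_\lambda)\,dx\le C$ independently, which is not in \cite{Ne}, or replace it by the elementary ``eventually $f^2\le 2\tilde f^2+M_2$'' trick above.
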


\begin{proof}

Using the semistability of the minimal solutions $u_\lambda$, Nedev (see the proof of Theorem 1 in \cite{Ne}) obtained that

\begin{equation}\label{nedev}\int_\Omega \frac{(f(u_\lambda)-f(0))^2}{u_\lambda}\leq M_1 \, , \ \ \ \forall \lambda\in (0,\lambda^\ast),\end{equation}

\noindent where $M_1$ is a constant independent of $\lambda$. On the other hand, since $\lim_{s\to +\infty}f(s)=+\infty$, then $\lim_{s\to +\infty}\left(2(f(s)-f(0))^2-f(s)^2\right)=+\infty$. Thus $2(f(s)-f(0))^2-f(s)^2\geq -M_2$, for every $s\geq 0$, where $M_2$ is a constant depending only on $f$. Applying this and (\ref{nedev}), we conclude that

$$\int_{u_\lambda>1} \frac{f(u_\lambda)^2}{u_\lambda}\leq\int_{u_\lambda>1} \frac{M_2+2\left(f(u_\lambda)-f(0)\right)^2}{u_\lambda}\leq M_2\vert \Omega \vert+2M_1,$$

\noindent and the lemma follows.
\end{proof}

The paper is organized as follows. Section \ref{N==4} deals with dimension $N$=4 and we prove Theorem \ref{N=4}. In Section \ref{N>>4}, the estimates of Theorem \ref{N>4} are proved. Finally, in Section \ref{sobolevv} we obtained some new $W^{1,q}$ and $W^{2,q}$ estimates of the extremal solution $u^\ast$.

\section{The case $N=4$}\label{N==4}

The following theorem is due to Cabr\'e, and it is the main estimate used in the proof of the results of \cite{cabre4}. We will use it, in order to obtain Lemma \ref{C1}.

\begin{theorem}(\cite{cabre4}).\label{desigualdades}
Let $g$ be any $C^\infty$-function and $\Omega\subset \mathbb{R}^N$ any smooth bounded domain. Assume that $2\leq N\leq 4$.

Let $u\in C^1_0(\overline{\Omega})$, with $u>0$ in $\Omega$, a classical semistable solution of (\ref{general}). Then, for every $t>0$,

$$
\Vert u\Vert_{L^\infty (\Omega)}\leq t+\frac{K}{t}\vert\Omega\vert^\frac{4-N}{2N}\left(\int_{u<t}\vert\nabla u\vert^4\right)^{1/2},
$$

\noindent where $K$ is a universal constant (in particular, independent of $g$, $\Omega$ and $u$).

\end{theorem}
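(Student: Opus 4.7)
The strategy is the ``geometric semistability'' approach pioneered by Sternberg--Zumbrun. First I would specialize the semistability inequality
\[\int_\Omega(|\nabla\xi|^2 - g'(u)\xi^2)\,dx \;\ge\; 0\]
to test functions of the form $\xi = |\nabla u|\,\varphi$ with $\varphi$ Lipschitz and compactly supported in $\Omega$. Differentiating the equation $-\Delta u = g(u)$ in each coordinate gives $-\Delta(\partial_i u) = g'(u)\,\partial_i u$; testing this against $\varphi^2\,\partial_i u$, integrating by parts, and summing over $i$ yields an identity for $\int_\Omega g'(u)|\nabla u|^2\varphi^2\,dx$. Substituting back into the semistability inequality and invoking the pointwise identity $|D^2 u|^2 = |\nabla|\nabla u||^2 + |A|^2|\nabla u|^2$ on the regular level sets of $u$ (with $A$ the second fundamental form of $\{u = \mathrm{const}\}$) collapses the inequality into the Sternberg--Zumbrun geometric form
\[\int_\Omega\bigl(|A|^2|\nabla u|^2 + |\nabla_T|\nabla u||^2\bigr)\varphi^2\,dx \;\le\; \int_\Omega |\nabla u|^2\,|\nabla\varphi|^2\,dx,\]
where $\nabla_T$ is the tangential gradient along the level hypersurfaces.

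Next I would pass from this spatial estimate to bounds on individual level sets $\Sigma_s := \{u=s\}$ via the Michael--Simon--Sobolev inequality on the $(N-1)$-dimensional hypersurface $\Sigma_s$:
\[\|w\|_{L^{(N-1)/(N-2)}(\Sigma_s)} \;\le\; C(N)\int_{\Sigma_s}\bigl(|\nabla_T w| + w|H|\bigr)\,d\mathcal{H}^{N-1},\]
with $H$ the mean curvature (controlled by $|A|$). Applying MSS to a function $w$ built from $|\nabla u|$ and $\varphi$, combining with the coarea formula $\int_\Omega F\,dx = \int_0^{\|u\|_{L^\infty}}\!ds\int_{\Sigma_s} F/|\nabla u|\,d\mathcal{H}^{N-1}$, and choosing $\varphi = \varphi(u)$ as a Lipschitz cutoff in $u$ vanishing for $u\ge t$ (so that $|\nabla\varphi|$ contributes a factor $|\nabla u|/t$ on its support), H\"older's inequality will then couple the Sternberg--Zumbrun bound to $(\int_{u<t}|\nabla u|^4)^{1/2}$, weighted by the volume factor $|\Omega|^{(4-N)/(2N)}$. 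Finally, since $u>0$ in $\Omega$ and $u|_{\partial\Omega}=0$ force an interior maximum point, I would bound $\|u\|_{L^\infty} - t$ by integrating $|\nabla u|$ along a path from the level set $\{u=t\}$ to a maximum point of $u$, and rearrange to obtain the stated inequality.

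The main obstacle is the dimension restriction $N\le 4$, which enters at two complementary places: the Sobolev exponent $(N-1)/(N-2)$ from Michael--Simon--Sobolev pairs cleanly against the fourth power of $|\nabla u|$ only for $N\le 4$, and the volume factor $|\Omega|^{(4-N)/(2N)}$ has nonnegative exponent exactly in this range. The technical heart of the argument is choosing the cutoff $\varphi=\varphi(u)$ sharply enough to produce the factor $1/t$, handling the measure-zero set of critical points of $u$ by working on regular level sets and passing to the limit, and arranging the MSS--coarea--H\"older combination so that the final constant $K$ depends only on $N$ (and not on $g$, $\Omega$, or $u$); every inequality invoked must be universal in that sense.
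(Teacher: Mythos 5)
This statement is quoted by the paper directly from Cabr\'e \cite{cabre4}; the paper supplies no proof of its own, so what you are reconstructing is Cabr\'e's argument. Your high-level outline is in the right spirit: the proof does indeed begin from the Sternberg--Zumbrun device of testing the stability quadratic form with $\xi=|\nabla u|\,\varphi$ and using $-\Delta(\partial_i u)=g'(u)\partial_i u$ to cancel the potential term, arriving at a geometric inequality in which $|A|^2|\nabla u|^2$ and the tangential derivative of $|\nabla u|$ appear on the left and $|\nabla u|^2|\nabla\varphi|^2$ on the right. That part is faithful.

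There are, however, two concrete problems. First, your cutoff is oriented the wrong way. You say ``$\varphi=\varphi(u)$ a Lipschitz cutoff in $u$ vanishing for $u\ge t$,'' but the admissible test functions $\xi=|\nabla u|\,\varphi(u)$ must vanish on $\partial\Omega$, where $u=0$ but $|\nabla u|$ does not; hence one must have $\varphi(0)=0$. The correct choice is $\varphi(u)=\min(u/t,1)$, which equals $u/t$ on $\{u<t\}$ and is identically $1$ on $\{u\ge t\}$. This gives $|\nabla\varphi|=|\nabla u|/t$ supported on $\{u<t\}$ (recovering the right-hand side $t^{-2}\int_{u<t}|\nabla u|^4$) while keeping the left-hand side alive on the superlevel region $\{u\ge t\}$, which is precisely where geometric information about the high level sets is needed to reach the interior maximum. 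With your choice, $\varphi$ is identically zero on $\{u\ge t\}$ and the left-hand side loses all control over the region that contains the max.

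Second, the closing step --- ``bound $\|u\|_{L^\infty}-t$ by integrating $|\nabla u|$ along a path from $\{u=t\}$ to a maximum point'' --- is not a valid argument as stated. For an arbitrary path $\gamma$ one only gets $\|u\|_\infty-t\le\int_\gamma|\nabla u|$, and there is no a priori control of $|\nabla u|$ pointwise along any particular curve; nothing in the Sternberg--Zumbrun inequality bounds a line integral of $|\nabla u|$. Passing from the integral curvature/tangential-gradient bound on the superlevel sets to the $L^\infty$ bound is the genuine technical core of Cabr\'e's proof and requires an averaged argument (coarea over the levels $s\in(t,\|u\|_\infty)$ together with a hypersurface Sobolev/isoperimetric inequality on each $\{u=s\}$, H\"older's inequality, and the volume factor $|\Omega|^{(4-N)/(2N)}$). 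Your sketch gestures at the coarea and Michael--Simon pieces but then replaces the crucial final inequality with the path-integral heuristic, which does not close the argument. Until that step is made precise, the proposal has a gap exactly where the dimensional restriction $N\le 4$ and the universality of the constant $K$ have to be established.
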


\begin{lemma}\label{C1}
Let $g$ be any $C^1$-function satisfying $g(0)>0$ and $\Omega\subset \mathbb{R}^N$ any smooth bounded domain. Assume that $2\leq N\leq 4$.

Let $u\in C^1_0(\overline{\Omega})$, with $u>0$ in $\Omega$, a classical minimal positive solution of (\ref{general}) (i.e., $u$ is the only solution of (\ref{general}) in the set $\left\{ w\in C^1_0(\overline{\Omega}):\, 0\leq w\leq u\right\}$). Then, for every $t>0$,

$$
\Vert u\Vert_{L^\infty (\Omega)}\leq t+\frac{K}{t}\vert\Omega\vert^\frac{4-N}{2N}\left(\int_{u<t}\vert\nabla u\vert^4\right)^{1/2},
$$

\noindent where $K$ is a universal constant (in particular, independent of $g$, $\Omega$ and $u$). In fact, we can take  the same constant $K$ of Theorem \ref{desigualdades}.

\end{lemma}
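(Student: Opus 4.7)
The strategy is to reduce to Theorem~\ref{desigualdades} by approximating the $C^1$ nonlinearity $g$ with a sequence of $C^\infty$ functions. The two things to handle are (i) producing approximate classical \emph{semistable} solutions $u_n$ to which Theorem~\ref{desigualdades} applies, and (ii) passing to the limit in the resulting inequalities, especially on the level-set integral on the right-hand side.

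First, I would mollify $g$ and subtract a small positive constant to obtain a sequence $g_n \in C^\infty(\mathbb{R})$ with $g_n(0)>0$, $g_n \leq g$ on $[0,\Vert u\Vert_{L^\infty}]$, and $g_n\to g$ in $C^1_{\text{loc}}(\mathbb{R})$. By construction, $\underline v=0$ is a subsolution and $\overline v = u$ a supersolution of $-\Delta v=g_n(v)$, $v|_{\partial\Omega}=0$. The standard monotone iteration (adding a large multiple $Mv$ to both sides to linearize a nondecreasing iteration from $g_n$ being Lipschitz on $[0,\Vert u\Vert_{L^\infty}]$) produces a minimal classical solution $u_n$ with $0\leq u_n\leq u$; the first iterate already solves $(-\Delta+M)v=g_n(0)>0$ and so is strictly positive in $\Omega$, forcing $u_n>0$ there. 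Elliptic regularity with $g_n\in C^\infty$ gives $u_n\in C^{2,\alpha}(\overline\Omega)$, and a standard argument shows minimality implies semistability: if the linearized first eigenvalue at $u_n$ were negative, subtracting a small multiple of the corresponding positive eigenfunction would yield a strict supersolution strictly below $u_n$, contradicting minimality. Theorem~\ref{desigualdades} then applies with the universal constant $K$ to give, for every $t>0$,
$$
\Vert u_n\Vert_{L^\infty(\Omega)}\leq t+\frac{K}{t}\vert\Omega\vert^{\frac{4-N}{2N}}\left(\int_{u_n<t}\vert\nabla u_n\vert^4\right)^{1/2}.
$$

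To pass to the limit, note that $g_n(u_n)$ is uniformly bounded, so by Calder\'on--Zygmund estimates $u_n$ is bounded in $W^{2,p}(\Omega)$ for every $p<\infty$, hence precompact in $C^1(\overline\Omega)$. Any cluster point $\tilde u$ solves $-\Delta\tilde u=g(\tilde u)$ classically with $0\leq\tilde u\leq u$; it cannot vanish identically since $g(0)>0$, so by the minimality assumption on $u$ (the only solution in $\{0\leq w\leq u\}$) we must have $\tilde u=u$, whence the entire sequence converges to $u$ in $C^1(\overline\Omega)$. The one subtle point is the convergence
$$
\int_{u_n<t}\vert\nabla u_n\vert^4\ \longrightarrow\ \int_{u<t}\vert\nabla u\vert^4.
$$
I would handle this by uniform convergence of $\vert\nabla u_n\vert^4$ to $\vert\nabla u\vert^4$ on $\overline\Omega$ combined with a.e.\ pointwise convergence of $\mathbf{1}_{\{u_n<t\}}$ to $\mathbf{1}_{\{u<t\}}$ on the complement of $\{u=t\}$, together with the Stampacchia identity $\vert\nabla u\vert=0$ a.e.\ on $\{u=t\}$ to neutralize the ambiguity on that level set; dominated convergence then closes the argument. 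Letting $n\to\infty$ in the inequality above gives the desired estimate for $u$ with the same constant $K$. The main technical obstacle is precisely this level-set passage to the limit; the construction of the approximating minimal semistable solutions is routine given the hypothesis $g(0)>0$.
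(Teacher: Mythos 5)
Your proposal is correct and follows the same overall strategy as the paper: approximate $g$ by smooth nonlinearities $g_n$ below $g$ with $g_n(0)>0$, produce classical \emph{semistable} approximants $u_n$ trapped between the subsolution $0$ and the supersolution $u$, invoke Theorem~\ref{desigualdades}, and pass to the limit using minimality of $u$ to identify the $C^1$ limit with $u$. The two places where you diverge from the paper are both minor technical variations rather than a different method. First, the paper obtains the semistable approximant $u_n$ as the absolute minimizer of the energy functional over the closed convex set $\{w\in H_0^1:0\le w\le u\}$, for which semistability is automatic from the second variation (citing \cite[Rem.~1.11]{cc}); you instead take the minimal solution from monotone iteration and prove semistability by the classical eigenvalue-perturbation argument (if $\lambda_1<0$, subtract $\epsilon\phi_1$ to build a strict supersolution below $u_n$, contradicting minimality). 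Both are standard; the paper's route saves the extra lemma. Second, for the level-set convergence $\int_{u_n<t}|\nabla u_n|^4\to\int_{u<t}|\nabla u|^4$, the paper exploits the one-sided bound $u_n\le u$ to get everywhere-pointwise convergence of $\chi_{\{u_n\le t\}}\to\chi_{\{u\le t\}}$ and then appeals to Sard's theorem, whereas you neutralize the level set $\{u=t\}$ via the Stampacchia identity $\nabla u=0$ a.e.\ there; both close the argument by dominated convergence. Incidentally, since your $u_n$ are also constructed with $0\le u_n\le u$, you could have used the paper's simpler one-sided indicator argument rather than the a.e.\ Stampacchia route, though what you wrote is equally valid.
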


\begin{proof} Let $L=\Vert u \Vert_{L^\infty(\Omega)}$. Take a sequence of polynomials $p_n$ such that $p_n(x)<g(x)$ for every $x\in [0,L]$ and $p_n\to g$ in $L^\infty (0,L)$ as $n\to\infty$. (Take for instance $p_n$ such that $g-2/n\leq p_n\leq g-1/n$ in $[0,L]$). Hence $u$ is a strict supersolution of the problem

$$
\left\{
\begin{array}{ll}
-\Delta w=p_n(w)\ \ \ \ \ \ \  & \mbox{ in } \Omega \, ,\\
w=0  & \mbox{ on } \partial\Omega \, ,\\
\end{array}
\right. \eqno{(P_n)}
$$
\

On the other hand, since $g(0)>0$ and $p_n\to g$ in $L^\infty (0,L)$ as $n\to\infty$, we have that, up to a subsequence, $p_n(0)>0$ for every $n\in \natu$. This is equivalent to the fact that the trivial function $0$ is a strict subsolution of the problem $(P_n)$. Then, the energy functional for this equation is well defined in the closed convex set of $H_0^1(\Omega)$ functions $w$ satisfying $0\leq w\leq u$, and it admits an absolute minimizer $u_n$ in this convex set. It is well known that $u_n$ is a classical semistable solution of $(P_n)$ (see \cite[Rem. 1.11]{cc} for more details). Therefore, by Theorem \ref{desigualdades}

\begin{equation}\label{generaln}
\begin{array}{ll}
\displaystyle{\Vert u_n\Vert_{L^\infty (\Omega)}\leq t+\frac{K}{t}\vert\Omega\vert^\frac{4-N}{2N}\left(\int_{u_n<t}\vert\nabla u_n\vert^4\right)^{1/2}, \ \forall t>0.}
\end{array}
\end{equation}

Since $\Vert u_n \Vert_{L^\infty(\Omega)}\leq L$, then $\Vert p_n(u_n) \Vert_{L^\infty(\Omega)}\leq \Vert p_n \Vert_{L^\infty(0,L)}\leq C'$, for some constant $C'$. Thus, by elliptic regularity (see \cite{ADN}), $\Vert u_n \Vert_{W^{2,p}(\Omega)}$ is bounded for every $1<p<\infty$. Choosing $p>N$, we can suppose, up to a subsequence, that $u_n\rightharpoonup u_0$ in $W^{2,p}(\Omega)$ and $u_n\rightarrow u_0$ in $C^1_0(\overline{\Omega})$ for some function $u_0\in W^{2,p}(\Omega)$. On the other hand

$$\Vert p_n(u_n)-g(u)\Vert_{L^\infty(\Omega)}\leq \Vert p_n(u_n)-g(u_n)\Vert_{L^\infty(\Omega)}+\Vert g(u_n)-g(u)\Vert_{L^\infty(\Omega)}$$

$$\leq \Vert p_n -g\Vert_{L^\infty(0,L)}+\Vert u_n-u\Vert_{L^\infty(\Omega)}\Vert g'\Vert_{L^\infty(0,L)}\rightarrow 0, \mbox{ as } n\rightarrow \infty.$$

Thus $p_n(u_n)\rightarrow g(u)$ in $L^\infty (\Omega)$ and it follows easily that $u_0$ is a classical solution of (\ref{general}). Since $0\leq u_0\leq u$ and $u$ is a classical minimal positive solution of (\ref{general}), we deduce that $u_0=u$. Hence $u_n\rightarrow u$ in $C_0^1(\overline{\Omega})$.

We claim that $\chi_{\{u_n\leq t\} }(x)\to \chi_{\{u\leq t\} }(x)$ for every $x\in \Omega$ and $t>0$. Here $\chi_A$ denotes the characteristic function of the set $A$. Indeed, if $u(x)\leq t$ then $u_n(x)\leq u(x)\leq t$ and consequently $\chi_{\{u_n\leq t\}}(x)=\chi_{\{u\leq t\}}(x)=1$. If $u(x)>t$ then, by the  $L^\infty(\Omega)$ convergence of $u_n$ to $u$, we have that $u_n(x)>t$ for large $n$ and it follows that $\chi_{\{u_n\leq t\}}(x)\to 0=\chi_{\{u\leq t\}}(x)$.

Applying this, the convergence of $u_n$ to $u$ in $C_0^1(\overline{\Omega})$, Sard's Theorem and the Lebesgue's dominated convergence Theorem we conclude, for every $t>0$, that

$$\int_{u_n<t} \vert \nabla u_n\vert^4=\int_{u_n\leq t} \vert \nabla u_n\vert^4=\int_\Omega \vert\nabla u_n\vert^4 \chi_{\{u_n\leq t\}}\rightarrow \int_\Omega \vert\nabla u\vert^4 \chi_{\{u\leq t\}}$$

$$=\int_{u\leq t} \vert \nabla u\vert^4=\int_{u<t} \vert \nabla u\vert^4.$$

Thus, taking limit as $n$ tends to $\infty$ in (\ref{generaln}), the proof is complete.
\end{proof}

\begin{proposition}\label{des}

Let $g$ be a function and $\Omega\subset \mathbb{R}^4$ any smooth bounded domain. Let $u\in C^1_0(\overline{\Omega})$, with $u>0$ in $\Omega$, a classical solution of (\ref{general}). Suppose that one of the following holds:

\begin{enumerate}
\item[(i)] $g\in C^\infty$ and $u$ is a semistable solution.
\item[(ii)] $g\in C^1$, $g(0)>0$ and $u$ is a minimal positive solution (i.e., $u$ is the only solution of (\ref{general}) in the set $\left\{ w\in C^1_0(\overline{\Omega}):\, 0\leq w\leq u\right\}$).
\end{enumerate}

Then, there exists a universal constant C (in particular, independent of $g$, $\Omega$, and $u$) such that

$$\Vert u\Vert_{L^\infty (\Omega)}\leq C \Vert \nabla u\Vert_{L^4 (\Omega)}.$$

\end{proposition}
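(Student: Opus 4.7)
The plan is to apply Theorem \ref{desigualdades} in case (i) and Lemma \ref{C1} in case (ii), and then optimize the free parameter $t>0$. The key observation is that for $N=4$ the dimensional exponent $(4-N)/(2N)$ vanishes, so the factor $|\Omega|^{(4-N)/(2N)}$ becomes $1$ and the estimate becomes scale-free in $|\Omega|$. This is precisely what makes the $L^\infty$-to-$\|\nabla u\|_{L^4}$ bound universal.

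More concretely, in either hypothesis (i) or (ii), Theorem \ref{desigualdades}/Lemma \ref{C1} applied with $N=4$ gives, for every $t>0$,
\begin{equation*}
\|u\|_{L^\infty(\Omega)} \leq t + \frac{K}{t}\left(\int_{u<t}|\nabla u|^4\right)^{1/2} \leq t + \frac{K}{t}\,\|\nabla u\|_{L^4(\Omega)}^{2},
\end{equation*}
where $K$ is the universal constant from Theorem \ref{desigualdades}. I would then minimize the right-hand side over $t>0$. Writing $A := \|\nabla u\|_{L^4(\Omega)}^{2}$, the function $t\mapsto t + KA/t$ attains its minimum at $t=\sqrt{KA}$, with minimum value $2\sqrt{KA}$. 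Thus
\begin{equation*}
\|u\|_{L^\infty(\Omega)} \leq 2\sqrt{K}\,\|\nabla u\|_{L^4(\Omega)},
\end{equation*}
so the proposition holds with $C = 2\sqrt{K}$, which depends only on $K$ and is therefore universal.

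There is essentially no main obstacle here: the real work has already been done in Theorem \ref{desigualdades} (Cabr\'e's semistable estimate) and its extension to minimal solutions via the polynomial-approximation argument of Lemma \ref{C1}. The only things to verify are the two trivial steps already used above, namely that the exponent on $|\Omega|$ vanishes when $N=4$ and that $\int_{u<t}|\nabla u|^4 \leq \int_\Omega |\nabla u|^4$. The rest is just the elementary optimization of $t + KA/t$.
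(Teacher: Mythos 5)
Your proof is correct and is essentially the same as the paper's: both apply Theorem \ref{desigualdades} (case (i)) or Lemma \ref{C1} (case (ii)) with $N=4$, bound $\int_{u<t}|\nabla u|^4$ by $\int_\Omega|\nabla u|^4$, and then choose $t$ in terms of $\|\nabla u\|_{L^4}$; the only difference is that you optimize over $t$ to get $C=2\sqrt{K}$, while the paper simply takes $t=\|\nabla u\|_{L^4}$ to obtain $C=1+K$.
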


\begin{proof} Applying Theorem \ref{desigualdades} and Lemma \ref{C1} with $N=4$, we can assert that

$$
\Vert u\Vert_{L^\infty (\Omega)}\leq t+\frac{K}{t}\left(\int_{u<t}\vert\nabla u\vert^4 \right)^{1/2}\leq t+\frac{K}{t}\left(\int_\Omega\vert\nabla u\vert^4 \right)^{1/2}, \, \forall t>0.
$$

Taking $\displaystyle{t=\left(\int_\Omega\vert\nabla u\vert^4 \right)^{1/4}}$ in this expression we obtain

$$
\Vert u\Vert_{L^\infty (\Omega)}\leq\left(\int_\Omega\vert\nabla u\vert^4 \right)^{1/4}+\frac{K}{\displaystyle{\left(\int_\Omega\vert\nabla u\vert^4 \right)^{1/4}}}\left(\int_\Omega\vert\nabla u\vert^4 \right)^{1/2}$$

$$=\left(1+K\right)\Vert \nabla u\Vert_{L^4 (\Omega)},$$

\

\noindent and the lemma follows with $C=1+K$.
\end{proof}

\begin{remark}\label{C} From classical embedding results of Sobolev spaces, it is well-known that, for a smooth bounded domain $\Omega\subset \mathbb{R}^4$,  we have the continuous inclusions $W^{1,4}\subset L^p$, for every $1\leq p<\infty$, and $W^{1,4+\varepsilon}\subset L^\infty$, for every $\varepsilon >0$. On the other hand it is also well-known that $W^{1,4}\not\subset L^\infty$, which is equivalent to the unboundness of the quotients $\Vert u\Vert_{L^\infty (\Omega)}/\Vert \nabla u\Vert_{L^4 (\Omega)}$, $u\in C^1_0(\overline{\Omega})$, with $u>0$ in $\Omega$. The previous proposition asserts that, under some stability hypothesis on $u$, these quotients are bounded.
\end{remark}

\noindent {\bf Proof of Theorem \ref{N=4}.} It is well-known that, for every smooth domain $\Omega\subset\mathbb{R}^4$, we have the continuous inclusion $W^{2,2}(\Omega)\subset W^{1,4}(\Omega)$. Thus, there exists a constante $C_1=C_1(\Omega)$, depending only on $\Omega$, such that

\begin{equation}\label{sobolev}\Vert u\Vert_{W^{1,4}(\Omega)}\leq C_1\Vert u\Vert_{W^{2,2}(\Omega)}\, , \ \ \ \ \ \forall u\in W^{2,2}(\Omega).\end{equation}

On the other hand, by elliptic regularity (see \cite{ADN}), there exists a constant $C_2=C_2(\Omega)$, depending only on $\Omega$, such that

\begin{equation}\label{regularity}\Vert u_\lambda\Vert_{W^{2,2}(\Omega)}\leq C_2\Vert \lambda f(u_\lambda) \Vert_{L^2(\Omega)}.\end{equation}

Applying (\ref{sobolev}), (\ref{regularity}), Lemma \ref{est} and Proposition \ref{des} (part (ii), with $g=\lambda f$), we deduce, for every $\lambda\in (0,\lambda^\ast)$, that

$$\Vert u_\lambda\Vert_{L^\infty}\leq C \Vert \nabla u_\lambda\Vert_{L^4}\leq C\Vert u_\lambda\Vert_{W^{1,4}}\leq  C C_1 \Vert u_\lambda\Vert_{W^{2,2}}\leq C C_1 C_2\Vert \lambda f(u_\lambda) \Vert_{L^2}$$

$$\leq C C_1 C_2 \lambda^\ast \left( \int_{u_\lambda\leq 1} f(u_\lambda)^2+\int_{u_\lambda >1} f(u_\lambda)^2\right)^{1/2}$$

$$\leq C C_1 C_2 \lambda^\ast \left( f(1)^2\vert \Omega \vert+\int_{u_\lambda>1} \frac{f(u_\lambda)^2}{u_\lambda}u_\lambda\right)^{1/2}$$

$$\leq C C_1 C_2 \lambda^\ast \left( f(1)^2\vert \Omega \vert+M \Vert u_\lambda\Vert_{L^\infty} \right)^{1/2}.$$

\

Therefore $\Vert u_\lambda\Vert_{L^\infty}^2\leq A +B\Vert u_\lambda\Vert_{L^\infty}$, for certain constant $A,B$ depending on $f$ and $\Omega$, but not on $\lambda\in (0,\lambda^\ast)$. We conclude that $\Vert u_\lambda\Vert_{L^\infty}$ is uniformly bounded in $\lambda\in(0,\lambda^\ast)$, and finally, taking limit $\lambda \to \lambda^\ast$, that $u^\ast \in L^\infty (\Omega)$. \qed

\section{The case $N\geq 5$}\label{N>>4}

\noindent {\bf Proof of Theorem \ref{N>4}.} Let $N\geq 5$. It is well-known that, for every smooth domain $\Omega\subset\mathbb{R}^N$ and exponent $1<p<N/2$, we have the continuous inclusion $W^{2,p}(\Omega)\subset L^\frac{Np}{N-2p}(\Omega)$. Thus, taking $p=N/(N-2)$, there exists a constante $C_3=C_3(\Omega)$, depending only on $\Omega$, such that

\begin{equation}\label{sobolevN}\Vert u\Vert_{L^\frac{N}{N-4}(\Omega)}\leq C_3\Vert u\Vert_{W^{2,\frac{N}{N-2}}(\Omega)}\, , \ \ \ \ \ \forall u\in W^{2,\frac{N}{N-2}}(\Omega).\end{equation}

On the other hand, by elliptic regularity (see \cite{ADN}), there exists a constant $C_4=C_4(\Omega)$, depending only on $\Omega$, such that

\begin{equation}\label{regularityN}\Vert u_\lambda\Vert_{W^{2,\frac{N}{N-2}}(\Omega)}\leq C_4\Vert \lambda f(u_\lambda) \Vert_{L^\frac{N}{N-2}(\Omega)}.\end{equation}

Applying (\ref{sobolevN}), (\ref{regularityN}), Lemma \ref{est} and H{\"o}lder inequality, we deduce, for every $\lambda\in (0,\lambda^\ast)$, that

$$\Vert u_\lambda\Vert_{W^{2,\frac{N}{N-2}}(\Omega)}\leq C_4 \Vert \lambda f(u_\lambda) \Vert_{L^\frac{N}{N-2}(\Omega)}$$

$$\leq C_4 \lambda^\ast \left( \int_{u_\lambda\leq 1} f(u_\lambda)^\frac{N}{N-2}+\int_{u_\lambda >1} f(u_\lambda)^\frac{N}{N-2}\right)^\frac{N-2}{N}$$

$$\leq C_4 \lambda^\ast \left( f(1)^\frac{N}{N-2}\vert \Omega \vert+\int_{u_\lambda>1} \left(\frac{f(u_\lambda)^2}{u_\lambda}\right)^\frac{N}{2(N-2)}u_\lambda^\frac{N}{2(N-2)}\right)^\frac{N-2}{N}$$

$\leq C_4 \lambda^\ast \left( f(1)^\frac{N}{N-2}\vert \Omega \vert+\left\Vert\left(\frac{f(u_\lambda)^2}{u_\lambda}\right)^\frac{N}{2(N-2)}\right\Vert_{L^\frac{2(N-2)}{N}\left(\left\{u_\lambda>1\right\}\right)} \right.$

$ \ \ \ \ \ \ \ \ \ \ \ \ \ \ \ \ \ \ \ \ \ \ \ \ \ \ \ \ \ \ \ \ \ \ \ \ \ \ \ \ \ \ \ \ \ \ \ \ \ \ \ \ \ \ \ \ \ \ \ \ \ \ \ \ \ \ \ \ \ \ \ \ \ \ \ \ \ \ \ \ \ \left. \times\left\Vert u_\lambda^\frac{N}{2(N-2)}\right\Vert_{L^\frac{2(N-2)}{N-4}\left(\left\{u_\lambda>1\right\}\right)}\right)^\frac{N-2}{N}$

$$\leq C_4 \lambda^\ast \left( f(1)^\frac{N}{N-2}\vert \Omega \vert+M^\frac{N}{2(N-2)}\Vert u_\lambda\Vert_{L^\frac{N}{N-4}(\Omega)}^\frac{N}{2(N-2)}\right)^\frac{N-2}{N}$$

$$\leq C_4 \lambda^\ast \left( f(1)^\frac{N}{N-2}\vert \Omega \vert+M^\frac{N}{2(N-2)}\left( C_3 \Vert u_\lambda\Vert_{W^{2,\frac{N}{N-2}}(\Omega)}\right)^\frac{N}{2(N-2)}\right)^\frac{N-2}{N}.$$

\

Therefore $\Vert u_\lambda\Vert_{W^{2,\frac{N}{N-2}}(\Omega)}^\frac{N}{N-2}\leq A +B\Vert u_\lambda\Vert_{W^{2,\frac{N}{N-2}}(\Omega)}^\frac{N}{2(N-2)}$, for certain constant $A,B$ depending on $f$ and $\Omega$, but not on $\lambda\in (0,\lambda^\ast)$. It follows that $\displaystyle{\Vert u_\lambda\Vert_{W^{2,\frac{N}{N-2}}(\Omega)}}$ is uniformly bounded in $\lambda\in(0,\lambda^\ast)$ and, taking into account the previous inequalities, $\displaystyle{\Vert f(u_\lambda)\Vert_{L^\frac{N}{N-2}}(\Omega)}$ is also uniformly bounded in $\lambda\in(0,\lambda^\ast)$. Therefore, taking limit $\lambda \to \lambda^\ast$, we deduce that $u^\ast \in W^{2,\frac{N}{N-2}}(\Omega)$ and $f(u^\ast) \in L^\frac{N}{N-2}(\Omega)$.

Finally, since $N\geq 5$, we have that $\displaystyle{W^{2,\frac{N}{N-2}}(\Omega)\subset W^{1,\frac{N}{N-3}}(\Omega)\subset L^\frac{N}{N-4}(\Omega)}$ and we conclude i) and ii). \qed

\section{Some new $W^{1,q}$ and $W^{2,q}$ estimates}\label{sobolevv}

\begin{proposition}\label{fijate}

Let $N\geq 5$, $f$ be a function satisfying (\ref{convexa}) and $\Omega\subset \mathbb{R}^N$ be a smooth bounded domain. Let $u^\ast$ be the extremal
solution of ($P_\lambda$). Suppose that $u^\ast \in L^p(\Omega)$ for some $p\in (1,\infty)$. Then $f(u^\ast)\in L^\frac{2p}{p+1}(\Omega)$ and $u^\ast \in W^{2,\frac{2p}{p+1}}(\Omega)\subset W^{1,\frac{2pN}{(p+1)N-2p}}(\Omega)$.

\end{proposition}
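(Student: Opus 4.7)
The plan is to follow essentially the same bootstrap scheme as in the proof of Theorem \ref{N>4}, but with the $L^{N/(N-4)}$ bound on $u_\lambda$ replaced by the given hypothesis $u^\ast\in L^p(\Omega)$, which by monotone convergence implies $\|u_\lambda\|_{L^p(\Omega)}\leq \|u^\ast\|_{L^p(\Omega)}$ uniformly in $\lambda\in(0,\lambda^\ast)$. The target exponent $q=2p/(p+1)$ is precisely the one that pairs Nedev's inequality from Lemma \ref{est} with the $L^p$ control on $u_\lambda$ via H\"older's inequality.

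Concretely, I would first split
\[
\int_\Omega f(u_\lambda)^{\frac{2p}{p+1}}=\int_{u_\lambda\le 1} f(u_\lambda)^{\frac{2p}{p+1}}+\int_{u_\lambda>1}f(u_\lambda)^{\frac{2p}{p+1}}.
\]
The first integral is bounded by $f(1)^{2p/(p+1)}|\Omega|$ using monotonicity of $f$. For the second, I would write $f(u_\lambda)^{2p/(p+1)}=\bigl(f(u_\lambda)^2/u_\lambda\bigr)^{p/(p+1)}\,u_\lambda^{p/(p+1)}$ and apply H\"older's inequality with conjugate exponents $(p+1)/p$ and $p+1$. This gives
\[
\int_{u_\lambda>1}f(u_\lambda)^{\frac{2p}{p+1}}\le \left(\int_{u_\lambda>1}\frac{f(u_\lambda)^2}{u_\lambda}\right)^{\!\frac{p}{p+1}}\!\!\left(\int_{u_\lambda>1}u_\lambda^{p}\right)^{\!\frac{1}{p+1}}\!\!\le M^{\frac{p}{p+1}}\|u^\ast\|_{L^p(\Omega)}^{\frac{p}{p+1}},
\]
where $M$ is the constant from Lemma \ref{est}. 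Hence $\|f(u_\lambda)\|_{L^{2p/(p+1)}(\Omega)}$ is uniformly bounded in $\lambda$.

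Next, by elliptic regularity (\cite{ADN}) applied to $-\Delta u_\lambda=\lambda f(u_\lambda)$, there is a constant $C=C(\Omega)$ with
\[
\|u_\lambda\|_{W^{2,\frac{2p}{p+1}}(\Omega)}\le C\lambda^\ast\|f(u_\lambda)\|_{L^{\frac{2p}{p+1}}(\Omega)},
\]
so $\|u_\lambda\|_{W^{2,2p/(p+1)}(\Omega)}$ is uniformly bounded as well. Passing to the limit $\lambda\to\lambda^\ast$ (using Fatou for $f(u^\ast)$ and weak compactness in $W^{2,2p/(p+1)}$) yields $f(u^\ast)\in L^{2p/(p+1)}(\Omega)$ and $u^\ast\in W^{2,2p/(p+1)}(\Omega)$. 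The embedding $W^{2,2p/(p+1)}(\Omega)\subset W^{1,2pN/((p+1)N-2p)}(\Omega)$ is then standard: since $q:=2p/(p+1)<2\le N$, one has $\nabla u^\ast\in W^{1,q}(\Omega)\subset L^{Nq/(N-q)}(\Omega)$, and a direct computation gives $Nq/(N-q)=2pN/((p+1)N-2p)$.

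The whole argument is a direct adaptation of Theorem \ref{N>4}, and the only point that requires any care is the choice of H\"older exponents; once that is identified as $(p+1)/p$ and $p+1$, the remaining steps are routine bookkeeping, and I do not expect any genuine obstacle. In the limit case $p=N/(N-4)$ one recovers exactly the exponent $N/(N-2)$ appearing in Theorem \ref{N>4}, so the proposition is consistent with and strictly extends that result by allowing the $L^p$ information on $u^\ast$ to be bootstrapped.
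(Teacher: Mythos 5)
Your proof is correct and follows essentially the same route as the paper: the same split of $\int_\Omega f(u_\lambda)^{2p/(p+1)}$ into the sets $\{u_\lambda\le 1\}$ and $\{u_\lambda>1\}$, the same H\"older pairing with exponents $(p+1)/p$ and $p+1$ against Nedev's estimate from Lemma~\ref{est}, the same elliptic regularity step, and the same Sobolev embedding. The only cosmetic difference is that the paper invokes monotone convergence to pass to the limit for $f(u^\ast)$, whereas you cite Fatou plus weak compactness; both are valid here since $u_\lambda\uparrow u^\ast$ and $f$ is nondecreasing.
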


\begin{proof} Applying Lemma \ref{est} and H{\"o}lder inequality, we deduce, for every $\lambda\in (0,\lambda^\ast)$, that

$$\Vert f(u_\lambda) \Vert_{L^\frac{2p}{p+1}(\Omega)}^\frac{2p}{p+1}=\int_{u_\lambda\leq 1} f(u_\lambda)^\frac{2p}{p+1}+\int_{u_\lambda >1}\left(\frac{f(u_\lambda)^2}{u_\lambda}\right)^\frac{p}{p+1}u_\lambda^\frac{p}{p+1}$$

$$\leq f(1)^\frac{2p}{p+1}\vert \Omega \vert+\left\Vert\left(\frac{f(u_\lambda)^2}{u_\lambda}\right)^\frac{p}{p+1}\right\Vert_{L^\frac{p+1}{p}\left(\left\{u_\lambda>1\right\}\right)}\left\Vert u_\lambda^\frac{p}{p+1}\right\Vert_{L^{p+1}\left(\left\{u_\lambda>1\right\}\right)}$$

$$\leq f(1)^\frac{2p}{p+1}\vert \Omega \vert+M^\frac{p}{p+1}\Vert u_\lambda\Vert_{L^p(\Omega)}^\frac{p}{p+1}\leq f(1)^\frac{2p}{p+1}\vert \Omega \vert+M^\frac{p}{p+1}\Vert u^\ast\Vert_{L^p(\Omega)}^\frac{p}{p+1}.$$

Letting $\lambda\uparrow\lambda^\ast$ and using the monotone convergence Theorem, we deduce that $f(u^\ast)\in L^\frac{2p}{p+1}(\Omega)$.

On the other hand, by elliptic regularity (\cite{ADN}), there exists $C_5=C_5(p,\Omega)$, depending only on $p$ and $\Omega$, such that

$$\Vert u_\lambda\Vert_{W^{2,\frac{2p}{p+1}}(\Omega)}\leq C_5 \left\Vert \lambda f(u_\lambda)\right\Vert_{L^\frac{2p}{p+1}(\Omega)}\leq C_5 \lambda^\ast \left\Vert f(u^\ast)\right\Vert_{L^\frac{2p}{p+1}(\Omega)}, \forall \lambda\in (0,\lambda^\ast).$$

Hence $\Vert u_\lambda\Vert_{W^{2,\frac{2p}{p+1}}(\Omega)}$ is uniformly bounded in $\lambda\in (0,\lambda^\ast)$. We conclude that $u^\ast \in W^{2,\frac{2p}{p+1}}(\Omega)\subset W^{1,\frac{2pN}{(p+1)N-2p}}(\Omega)$.
\end{proof}

\begin{corollary}\label{mirapordonde}

Let $5\leq N\leq 7$, $f$ be a function satisfying (\ref{convexa}) and $\Omega\subset \mathbb{R}^N$ be a convex smooth bounded domain. Let $u^\ast$ be the extremal
solution of ($P_\lambda$). Then $u^\ast \in W_0^{1,\frac{4N}{3N-8}}(\Omega)$.

\end{corollary}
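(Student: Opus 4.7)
The plan is to feed the strongest available $L^p$-estimate for the extremal solution on convex domains into Proposition \ref{fijate}. For convex $\Omega$ and $N\geq 5$, Cabré and Sanchón \cite{casa} proved that $u^\ast\in L^{\frac{2N}{N-4}}(\Omega)$, which is strictly better than the exponent produced by Theorem \ref{N>4}. First I would invoke this result, which is available under the hypotheses of the corollary.

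Next, I would apply Proposition \ref{fijate} with the choice $p=\frac{2N}{N-4}$. An elementary computation gives
\[
\frac{2p}{p+1}=\frac{4N}{3N-4}, \qquad \frac{2pN}{(p+1)N-2p}=\frac{4N}{3N-8},
\]
both positive and greater than $1$ in the range $5\leq N\leq 7$. The proposition then immediately yields
\[
u^\ast\in W^{2,\frac{4N}{3N-4}}(\Omega)\subset W^{1,\frac{4N}{3N-8}}(\Omega).
\]

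To upgrade this to $W_0^{1,\frac{4N}{3N-8}}(\Omega)$, I would use the uniform $W^{2,\frac{4N}{3N-4}}$-bound on the minimal solutions $u_\lambda$ furnished by the proof of Proposition \ref{fijate}. This gives, along a subsequence, $u_\lambda\rightharpoonup u^\ast$ in $W^{2,\frac{4N}{3N-4}}(\Omega)$, and hence in $W^{1,\frac{4N}{3N-8}}(\Omega)$ via the Sobolev embedding. Since each $u_\lambda$ lies in $W_0^{1,\frac{4N}{3N-8}}(\Omega)$ (being smooth with zero boundary data) and this subspace is weakly closed, the limit $u^\ast$ belongs to it as well.

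I do not foresee a substantive obstacle: the statement is essentially the instantiation of Proposition \ref{fijate} at the optimal $p$ that the regularity theory on convex domains supplies. The role of the constraint $N\leq 7$ is merely to ensure $\frac{4N}{3N-8}>2$, so that the conclusion actually improves upon Nedev's $H_0^1$-regularity on convex domains; for $N\geq 8$ the statement would be subsumed by that earlier result. The only care needed is in the exponent arithmetic and in the justification of the zero-trace condition on passing to the limit, both of which are routine.
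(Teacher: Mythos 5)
Your proof is the same as the paper's: invoke the Cabr\'e--Sanch\'on bound $u^\ast\in L^{2N/(N-4)}(\Omega)$ on convex domains and apply Proposition \ref{fijate} with $p=2N/(N-4)$, with the exponent arithmetic matching. The only addition is your explicit justification of the zero-trace property via weak compactness of the uniformly $W^{2,4N/(3N-4)}$-bounded family $u_\lambda$, a point the paper leaves implicit but which is correct and welcome.
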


\begin{proof} As we have mentioned in the Introduction, by a result of Cabr\'e and Sanch\'on \cite{casa}, if $\Omega$ is convex then $u^\ast\in L^\frac{2N}{N-4}(\Omega)$. Applying Proposition \ref{fijate} with $p=2N/(N-4)$, we obtain $u^\ast \in W^{2,\frac{4N}{3N-4}}(\Omega)\subset W^{1,\frac{4N}{3N-8}}(\Omega)$ and the corollary follows.
\end{proof}

\begin{remark} In the proof of the previous corollary we have not used that $N\leq 7$. It is immediate that $4N/(3N-8)\leq 2$ if and only if $N\geq 8$. Since $u^\ast \in H_0^1(\Omega)$ for convex smooth domains then the previous result has no interest for dimensions $N\geq 8$ and we prefer to state it in dimensions $5\leq N\leq 7$.
\end{remark}

\begin{corollary}\label{L3}

Let $N\geq 7$, $f$ be a function satisfying (\ref{convexa}) and $\Omega\subset \mathbb{R}^N$ be a smooth bounded domain. Let $u^\ast$ be the extremal
solution of ($P_\lambda$). Suppose that $u^\ast \in L^3(\Omega)$. Then $u^\ast \in H_0^1(\Omega)$.

\end{corollary}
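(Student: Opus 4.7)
The plan is to bypass the iterative Sobolev/$L^p$ machinery of Proposition \ref{fijate} and instead use $u_\lambda$ directly as a test function against its own equation, exploiting the estimate of Lemma \ref{est} through a Cauchy--Schwarz step. The point is that controlling $\int |\nabla u_\lambda|^2$ reduces to controlling $\int f(u_\lambda)\,u_\lambda$, and the expression $f(u_\lambda)\,u_\lambda$ factors perfectly as $(f(u_\lambda)^2/u_\lambda)^{1/2} u_\lambda^{3/2}$, which pairs Lemma \ref{est} with an $L^3$-norm.

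Concretely, I would first multiply $-\Delta u_\lambda = \lambda f(u_\lambda)$ by $u_\lambda$ (which vanishes on $\partial\Omega$) and integrate by parts, obtaining
\[
\int_\Omega |\nabla u_\lambda|^2\,dx \;=\; \lambda \int_\Omega f(u_\lambda)\,u_\lambda\,dx \;\leq\; \lambda^\ast\!\!\int_\Omega f(u_\lambda)\,u_\lambda\,dx.
\]
Next, I would split the right-hand side according to $\{u_\lambda\leq 1\}$ and $\{u_\lambda>1\}$. The first piece is bounded by $f(1)|\Omega|$ since $f$ is nondecreasing. On the second piece I would write
\[
f(u_\lambda)\,u_\lambda \;=\; \left(\frac{f(u_\lambda)^2}{u_\lambda}\right)^{\!1/2} u_\lambda^{3/2},
\]
apply Cauchy--Schwarz, and invoke Lemma \ref{est} together with the monotonicity $u_\lambda\leq u^\ast$ to get
\[
\int_{\{u_\lambda>1\}} f(u_\lambda)\,u_\lambda\,dx \;\leq\; \left(\int_{\{u_\lambda>1\}}\!\frac{f(u_\lambda)^2}{u_\lambda}\right)^{\!1/2}\!\!\left(\int_\Omega u_\lambda^3\right)^{\!1/2} \;\leq\; M^{1/2}\,\|u^\ast\|_{L^3(\Omega)}^{3/2}.
\]

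This yields a uniform bound $\|u_\lambda\|_{H_0^1(\Omega)}\leq C$ for all $\lambda\in(0,\lambda^\ast)$. To conclude, I would extract a weakly convergent subsequence $u_{\lambda_n}\rightharpoonup v$ in $H_0^1(\Omega)$ as $\lambda_n\uparrow \lambda^\ast$; since $u_\lambda\nearrow u^\ast$ pointwise and in $L^3(\Omega)$ by monotone convergence (using the hypothesis $u^\ast\in L^3(\Omega)$), the weak limit $v$ must agree with $u^\ast$ a.e., so $u^\ast\in H_0^1(\Omega)$.

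I do not anticipate any serious obstacle: the restriction $N\geq 7$ is essentially cosmetic, since for $N\leq 5$ the conclusion is Nedev's theorem and for $N=6$ it is Theorem \ref{N>4}(ii), so the $L^3$-hypothesis is only substantive in the range stated. The one item requiring a line of care is the passage from the uniform $H_0^1$-bound to $u^\ast\in H_0^1$, which is a routine weak-compactness argument thanks to the pointwise monotone convergence $u_\lambda\nearrow u^\ast$.
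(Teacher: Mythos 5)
Your proof is correct, and it is a somewhat more direct variant of the paper's argument. The paper routes through Proposition~\ref{fijate} with $p=3$: it first deduces the intermediate fact $f(u^\ast)\in L^{3/2}(\Omega)$, then pairs this with the hypothesis $u^\ast\in L^3(\Omega)$ via H\"older (exponents $3$ and $3/2$) to get $u^\ast f(u^\ast)\in L^1(\Omega)$, and finally invokes the energy identity $\int_\Omega|\nabla u_\lambda|^2=\lambda\int_\Omega u_\lambda f(u_\lambda)\leq\lambda^\ast\int_\Omega u^\ast f(u^\ast)$. You start from the same energy identity but skip the detour through $f(u^\ast)\in L^{3/2}$ entirely: the factorization $f(u_\lambda)u_\lambda=(f(u_\lambda)^2/u_\lambda)^{1/2}u_\lambda^{3/2}$ on $\{u_\lambda>1\}$ followed by Cauchy--Schwarz applies Lemma~\ref{est} and the $L^3$-bound in a single stroke. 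If you unwind Proposition~\ref{fijate}, the underlying tools are identical (Lemma~\ref{est}, H\"older, monotonicity $u_\lambda\leq u^\ast$), so this is not a fundamentally different method, but your inlined version is self-contained and gives a cleaner explicit bound $\int_\Omega|\nabla u_\lambda|^2\leq\lambda^\ast\bigl(f(1)|\Omega|+M^{1/2}\|u^\ast\|_{L^3}^{3/2}\bigr)$, whereas the paper's modular route is designed to reuse Proposition~\ref{fijate} across several corollaries. Your closing weak-compactness step is also fine, though one could argue even more simply: the $u_\lambda$ increase pointwise to $u^\ast$, so the uniform $H_0^1$-bound plus weak lower semicontinuity of the norm already forces the weak limit to be $u^\ast$.
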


\begin{proof} Applying Proposition \ref{fijate} we deduce that $f(u^\ast)\in L^\frac{3}{2}(\Omega)$. By H{\"o}lder inequality we obtain $u^\ast f(u^\ast)\in L^1(\Omega)$. It follows, for every $\lambda\in (0,\lambda^\ast)$, that

$$ \int_\Omega \vert \nabla u_\lambda \vert^2=\lambda\int_\Omega u_\lambda f(u_\lambda)\leq\lambda^\ast\int_\Omega u^\ast f(u^\ast).$$

Hence $\Vert u_\lambda\Vert_{H_0^1(\Omega)}$ is uniformly bounded in $\lambda\in (0,\lambda^\ast)$. We conclude that $u^\ast \in H_0^1(\Omega)$.
\end{proof}

\end{document}